\numberwithin{equation}{section}
\newtheorem{thm}{Theorem}[section]
\newtheorem{cor}[thm]{Corollary}
\newtheorem{prop}[thm]{Proposition}
\newtheorem{rem}[thm]{Remark}
\newtheorem{defsrem}[thm]{Definitions and Remark}
\newcommand{\fa}{\mathfrak{a}}
\newcommand{\fb}{\mathfrak{b}}
\newcommand{\fc}{\mathfrak{c}}
\newcommand{\m}{\mathfrak{m}}
\newcommand{\q}{\mathfrak{q}}
\newcommand{\N}{\mathbb{N}}
\newcommand{\Z}{\mathbb{Z}}
\newcommand{\cd}{\operatorname{cd}}
\newcommand{\grade}{\operatorname{grade}}
\newcommand{\faR}{f_{\fa}^{R_+}}
\newcommand{\fR}{f_{R_+}}
\newcommand{\fg}{finitely generated}
\newcommand{\fgR}{finitely generated $R$-module}
\newcommand{\fggR}{finitely generated graded $R$-module}
\newcommand{\fG}{finitely graded}
\newcommand{\lM}{linked by $I$ over $M$}
\newcommand{\R}{$R=\bigoplus_{n\in \N_0}R_n$}
\newcommand{\Ro}{$(R_0,\m_0)$}
\newcommand{\rs}{regular sequence}
\newcommand{\seq}{sequence}
\newcommand{\ab}{$\fa \stackrel{h} \sim_{(I;M)} \fb$}
\newcommand{\aR}{$\fa \stackrel{h} \sim_{(I;M)} R_+$}
\newcommand{\bM}{$\fb M=IM:_M \fa$}
\newcommand{\aM}{$\fa M=IM:_M \fb$}
\newcommand{\IM}{$\fa M \cap \fb M = IM$}
\newcommand{\homo}{homogeneous}
\newcommand{\st}{such that}
\newcommand{\MV}{Mayer-Vietoris}
\newcommand{\rCM}{relative Cohen-Macaulay}
\begin{document}

\address{ }
\email{}

\author{Maryam Jahangiri}
\address{Department of Mathematics, Faculty of Mathematical Sciences and Computer, Kharazmi University, Tehran,
Iran.}
\email{jahangiri@khu.ac.ir, jahangiri.maryam@gmail.com }
 \thanks{$^*$Corresponding author}

\author{Azadeh Nadali}
\email{a.nadali1984@gmail.com}

\author{Khadijeh Sayyari$^*$}
\email{std\underline{ }sayyari@khu.ac.ir, sayyarikh@gmail.com}

\subjclass[2010] {13A02, 13D45, 13C40, 13E05}
 \keywords{Graded local cohomology modules, Linkage of ideals,  Relative Cohen-Macaulay modules.  }

\title[  ASYMPTOTIC BEHAVIOUR OF GRADED LOCAL COHOMOLOGY MODULES VIA LINKAGE  ]
{ ASYMPTOTIC BEHAVIOUR OF GRADED LOCAL COHOMOLOGY MODULES VIA LINKAGE }

\begin{abstract}
Assume that $R=\oplus_{n\in \mathbb{N}_0}R_n$ is a standard graded algebra over the local ring $(R_0,\mathfrak{m}_0)$, $\mathfrak{a}$ is a homogeneous ideal of $R$, $M$ is a finitely generated graded
$R$-module and $R_+:=\oplus_{n\in \N}R_n$ denotes the irrelevant ideal of $R$.
 In this paper, we study the asymptotic behaviour of the set
$\{ \operatorname{grade}(\mathfrak{a} \cap R_0, H^{\operatorname{grade}(R_+,M)}_{R_+}(M)_n) \}_{n \in \mathbb{Z}}$  as
$n \rightarrow -\infty$, in the case where $\mathfrak{a}$ and $R_+$ are homogenously linked over $M$.

\end{abstract}

\maketitle
\section{introduction}

Let $R= \bigoplus_{n\in \N_0}R_n$ be a positively graded commutative Noetherian ring, $R_+=
\bigoplus_{n\in \N}R_n$ be the irrelevant ideal of $R$ and  $M$  denotes a
 graded $R$-module. Also, assume that $\fa$ is a \homo ~ ideal of $R$.
 Then for all $i \in \N_0$, the set of non-negative integers, the i-th local cohomology module $H^i_{\fa}(M)$ of $M$ with respect to $\fa$ has a natural grading (our terminology on local cohomology comes from \cite{BSH12}).
 Also, in the case where $M$ is \fg, it is well-known that the graded components of local cohomology modules $H^{i}_{R_+}(M)_n$ of $M$ with respect to the irrelevant ideal are trivial for sufficiently large values of $n$ and they are finitely generated as $R_0$-module for all $n \in \Z$, the set of integers (\cite[16.1.5]{BSH12}).
 The asymptotic behaviour of the components $H^i_{R_+}(M)_n$ when $n \rightarrow -\infty$ is more complicated and has been studied by many authors, too. See for example \cite{brodmann(survey)}, \cite{BH}, \cite{jzh} and \cite{thomasmarley}.

Assume that $R$ is  standard graded, i.e. $R_0$ is a commutative Noetehrian ring and $R$ is generated, as an
$R_0$-algebra, by finitely many elements of degree one, and $M$ is finitely generated. In \cite{jzh} the authors considered the problem of asymptotic behaviour of the set $\{\grade(\fa_0, H^i_{R_+}(M)_n)\}_{n \in \Z}$  when $n \rightarrow -\infty$, in the case where $(R_0,\m_0)$ is local and $\fa_0$ is a proper ideal of $R_0$.
They showed that this set is asymptotically stable in some special cases.
In this paper, we consider this problem, via linkage.

Following \cite{gradedlcm_JNS}, two \homo ~ ideals $\fa$ and $\fb$ are said to be \homo ly linked (or h-linked) by $I$ over $M$, denoted by \ab, if $I$ is generated by a \homo ~ $M$-\rs ~ and \aM ~ and \bM.
This is a generalization of the classical concept of linkage of ideals introduced by Peskine and Szpiro \cite{peskine1974liaison}.

Assume that the \homo ~ ideal $\fa$ and $R_+$ are h-linked over $M$. We show that the set
$\{\grade(\fa \cap R_0, H^{\grade(R_+,M)}_{R_+}(M)_n)\}_{n \in \Z}$ is asymptotically stable when $n \rightarrow -\infty$,
and also we determine its stable value, in some cases, see Proposition \ref{grade fa}. Also, in general case, we find a lower bound for this set.
More precisely, let
\[
f_{\fa }^{R_+}(M):=inf \{i | ~ R_+ \nsubseteq \sqrt{0:H^i_{\fa}(M)} \}
\]
denotes the $R_+$-finiteness dimension of $M$ relative to $\fa$ and $f_{R_+}(M):=f_{R_+}^{R_+}(M)$ be the finiteness dimension of $M$ relative to $R_+$. Then, we show that if \aR ~and $\grade(R_+,M)=f_{R_+}(M)$ then
$\grade(\fa \cap R_0, H^{\grade(R_+,M)}_{R_+}(M)_n) \geq f_{\fa +R_+}^{R_+}(M)-f_{R_+}(M)$ for all $n \ll 0$
 (Theorem \ref{grade a0}). As a corollary, we show that the graded components $H^{\grade(R_+,M)}_{R_+}(M)_n$ are \rCM ~ with respect to $\fa \cap R_0$ of degree $f_{\fa +R_+}^{R_+}(M)-f_{R_+}(M)$, in some certain cases (Corollary \ref{grade a 1}).
Recall that a non-zero \fgR ~ $X$ is said to be \rCM ~with respect to the ideal $\fc$ of degree $n$ if $H^i_{\fc}(X)=0$ for all $i\neq n$.\\
At the end, we show that if \Ro ~is local then the local cohomology modules $H^i_{\m_0R}(H^j_{R_+}(M))$ are Artinian,
for all $i,j \in \N_0$, when $R_+$ is linked by an $\m_0$-primary ideal (Proposition \ref{dim}).

Throughout the paper, \R ~ is a standard graded ring over the local base ring \Ro , $M$ is a \fggR ~ with $M\neq \Gamma_{R_+}(M)$.
Also, $\fa$ denotes a proper \homo ~ ideal of $R$ and we set $\fa_0:=\fa \cap R_0$.

\section{The results}

First, we recall the concept of homogeneously linked ideals from \cite{gradedlcm_JNS}, as well as some basic definitions which will be used later in the paper.

\begin{defsrem}
Let $\fb$ be a second \homo ~ ideal of $R$.
\begin{itemize}
  \item [(i)] Assume that $\fa M\neq M \neq \fb M$ and $I\subseteq \fa \cap \fb$ be an ideal generated by a \homo ~ $M$-\rs. Then we say that the ideals $\fa$ and $\fb$ are \homo ly linked (or h-linked) by $I$ over $M$, denoted \ab, if \bM ~and \aM. The ideals $\fa$ and $\fb$ are said to be  geometrically h-\lM ~if, in addition, \IM. Also, we say that the ideal $\fa$ is h-linked over $M$ if there exist \homo ~ ideals $\fb$ and $I$ of $R$ such that \ab. $\fa$ is h-$M$-selflinked by $I$ if $\fa \stackrel{h}\sim_{(I;M)} \fa$.
   \item [(ii)] Following \cite[9.1.5]{BSH12}, the $\fb$-finiteness dimension of $M$ relative to $\fa$ is defined to be
\[
f_{\fa}^{\fb}(M):= inf \{ i \in \N_0 |~ \fb \nsubseteq \sqrt{0:H^i_{\fa}(M)} \}.
\]
In the case where $\fb=\fa$, in view of \cite[9.1.3]{BSH12}, this invariant equals the finiteness dimension of $M$ relative to $\fa$.
That is
\[
f_{\fa}(M):= inf \{ i \in \N |~ H^i_{\fa}(M) \text{ is not finitely generated} \}.
\]
\end{itemize}
\end{defsrem}

  In the next proposition, we want to study the stability of the set $\{\grade(\fa_0,H^{\fR(M)}_{R_+}(M)_n)\}_{n \in \Z}$, of integers where $n \rightarrow -\infty$. This problem has already considered in \cite{jzh}.
Note that, by \cite[6.2.7]{BSH12} and \cite[3.16(ii)]{gradedlcm_JNS},
$\grade(R_+,M)\leq f_{R_+}(M) \leq f_{\fa+R_+}^{R_+}(M)$.

\begin{prop}\label{grade fa}
   Let  \aR ~  and set $t:=\grade(R_+,M)$.
      If $\fR(M)\neq t$ or $f_{\fa +R_+}^{R_+} (M)=t$, then $\grade(\fa_0,H^{\fR (M)}_{R_+}(M)_n)=0$ for all $n\ll 0$.\
\end{prop}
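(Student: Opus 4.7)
The plan is to introduce a Grothendieck spectral sequence that reduces the computation of the asymptotic grade to checking nonvanishing of a single graded component, and then to use each of the two hypotheses in the disjunction to produce that nonvanishing. Setting $u := \fR(M)$ and using that $\fa$ is homogeneous (so $\fa + R_+ = \fa_0 R + R_+$) together with the inclusion $R_+\subseteq\fa+R_+$, the composite torsion functor factors as $\Gamma_{\fa+R_+}=\Gamma_{\fa_0R}\circ\Gamma_{R_+}$, yielding
\[
E_2^{p,q}=H^p_{\fa_0R}\bigl(H^q_{R_+}(M)\bigr)\Longrightarrow H^{p+q}_{\fa+R_+}(M).
\]
Because $\fa_0R$ is generated in $R_0$, its Cech complex respects the grading, so $H^p_{\fa_0R}(X)_n=H^p_{\fa_0}(X_n)$ for any graded $R$-module $X$, and for each $n\in\Z$ the spectral sequence reads
\[
H^p_{\fa_0}\bigl(H^q_{R_+}(M)_n\bigr)\Longrightarrow H^{p+q}_{\fa+R_+}(M)_n.
\]

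Next I would kill every row strictly below $q=u$ for $n\ll 0$. The rows $q<t$ vanish by definition of $t$, and for $t\le q<u$ (nonempty only in the first case of the disjunction) the module $H^q_{R_+}(M)$ is finitely generated by definition of $\fR(M)$, hence bounded below, hence zero in degrees $n\ll 0$. Therefore, for $n\ll 0$, the spectral sequence is concentrated in rows $q\ge u$: the differentials into and out of $E_r^{0,u}$ are zero (into from negative columns, out into the vanished rows), every other piece on the antidiagonal $p+q=u$ is zero, and the filtration collapses to
\[
H^u_{\fa+R_+}(M)_n\;\cong\;\Gamma_{\fa_0}\bigl(H^u_{R_+}(M)_n\bigr)\quad(n\ll 0).
\]
Since $\grade(\fa_0,N)=0$ for a nonzero $R_0$-module $N$ is equivalent to $\Gamma_{\fa_0}(N)\ne 0$, the assertion now reduces to showing $H^u_{\fa+R_+}(M)_n\ne 0$ for every $n\ll 0$.

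For the nonvanishing I would argue case by case. If $\faR(M)=t=u$, then the definition of $\faR(M)$ says $R_+\not\subseteq\sqrt{0:_RH^t_{\fa+R_+}(M)}$, so no uniform power of $R_+$ annihilates $H^u_{\fa+R_+}(M)$; but $H^u_{\fa+R_+}(M)\hookrightarrow H^u_{R_+}(M)$ (by the edge map of the spectral sequence) is graded-bounded above with $R_0$-finite components by \cite[16.1.5]{BSH12}, so a Noetherian pigeonhole rules out graded boundedness below, and tameness of the top non-finitely-generated local cohomology module $H^{\fR(M)}_{R_+}(M)$ upgrades \textquotedblleft nonzero for infinitely many $n\to-\infty$\textquotedblright\ to \textquotedblleft nonzero for every $n\ll 0$\textquotedblright. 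If $u\ne t$, I would use the same tameness together with the linkage identity $\fa M=IM:_M R_+$, which plants a persistent $\fa_0$-socle inside $M/IM$ that propagates, through the edge maps of the spectral sequence, into an $\fa_0$-socle of $H^u_{R_+}(M)_n$ for every $n\ll 0$.

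The main obstacle is this last step: the spectral-sequence identification of the second paragraph is formal, but strengthening \textquotedblleft $R_+$ is not in the annihilator radical\textquotedblright\ (respectively, \textquotedblleft $H^u_{R_+}(M)$ is not finitely generated\textquotedblright) to \textquotedblleft $\Gamma_{\fa_0}(H^u_{R_+}(M)_n)\ne 0$ for every $n\ll 0$\textquotedblright\ rests on tameness of $H^{\fR(M)}_{R_+}(M)$ combined with the rigidity that the linkage relation $\fa M=IM:_M R_+$ imposes on the asymptotic associated primes over $R_0$, and this is where the bulk of the technical effort is expected to lie.
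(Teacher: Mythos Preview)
Your spectral-sequence reduction to the identity $\Gamma_{\fa_0}(H^u_{R_+}(M)_n)\cong H^u_{\fa+R_+}(M)_n$ for $n\ll 0$ is correct and is essentially what the paper uses (via Nagel--Schenzel) in the second case. The problem lies in the nonvanishing step, which you yourself flag as the obstacle.

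In the case $u\neq t$ your proposed mechanism---a socle in $M/IM$ propagating through edge maps---is not a proof, and it is also not the right idea. What linkage actually buys you here is a homogeneous Mayer--Vietoris sequence: since $\fa\stackrel{h}\sim_{(I;M)}R_+$, the ideals $\fa\cap R_+$ and $I$ give the same local cohomology on $M$, so
\[
\cdots\to H^{i-1}_I(M)\to H^i_{\fa+R_+}(M)\to H^i_\fa(M)\oplus H^i_{R_+}(M)\to H^i_I(M)\to\cdots,
\]
and because $H^i_I(M)=0$ for $i\neq t$ this forces $H^i_{R_+}(M)$ to be a direct summand (or quotient, at level $t+1$) of the $(\fa+R_+)$-torsion module $H^i_{\fa+R_+}(M)$ for every $i\neq t$. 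Hence $H^u_{R_+}(M)$ is \emph{entirely} $\fa$-torsion when $u\neq t$, so $\Gamma_{\fa_0}(H^u_{R_+}(M)_n)=H^u_{R_+}(M)_n$, and now tameness of $H^{\fR(M)}_{R_+}(M)$ itself (which is a known theorem) gives the nonvanishing directly. Your route via $H^u_{\fa+R_+}(M)_n$ would also need this Mayer--Vietoris input to show that module is not finitely graded; without it there is no reason $f^{R_+}_{\fa+R_+}(M)$ could not exceed $u$.

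In the case $u=t=f^{R_+}_{\fa+R_+}(M)$ your ``tameness upgrade'' is not justified: tameness of $H^u_{R_+}(M)$ says nothing about tameness of its $\fa_0$-torsion part. You have correctly argued that $H^t_{\fa+R_+}(M)_n\neq 0$ for infinitely many $n\to-\infty$, but passing from ``infinitely many'' to ``all $n\ll 0$'' requires an asymptotic-stability result for $\Gamma_{\fa_0}(H^t_{R_+}(M)_n)$, which is a separate (and nontrivial) input the paper imports from \cite[2.2]{jzh}.
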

\begin{proof}
    By the \homo ~\MV ~ \seq~ and \cite[2.2]{cohomological3}, we have the following \homo ~exact \seq ~ of $R$-modules
  \begin{equation*}
    \ldots \longrightarrow H_I^{i-1}(M) \longrightarrow H_{\fa + R_+} ^i(M) \longrightarrow H_{\fa}^i(M) \oplus H_{R_+}^i(M) \longrightarrow H_I^i(M)  \longrightarrow \ldots ~.
  \end{equation*}
  It yields
  \begin{equation}\label{iso2}
    H_{\fa + R_+} ^i(M)\cong H_{\fa}^i(M) \oplus H_{R_+}^i(M) \qquad \text{for all $i \gneq t+1$} \tag{2.2}
  \end{equation}
  and, by \cite[6.2.7]{BSH12}, the exact \seq

  \begin{equation}\label{exact}
    0 \longrightarrow H_{\fa + R_+} ^t(M) \longrightarrow H_{\fa}^t(M) \oplus H_{R_+}^t(M) \longrightarrow H_I^t(M)
    \longrightarrow H_{\fa +R_+} ^{t+1}(M) \longrightarrow  H_{\fa}^{t+1}(M) \oplus H_{R_+}^{t+1}(M) \longrightarrow 0. \tag{2.3}
  \end{equation}

  These, result that $H^i_{\fa}(M)$ and $H^i_{R_+}(M)$ are respectively $R_+$-torsion and $\fa$-torsion for all $i\neq t$. So, if $\fR(M)\neq t$, then $\Gamma_{\fa}(H^{\fR(M)}_{R_+}(M))=H^{\fR(M)}_{R_+}(M)$. Therefore, by \cite[14.1.12]{BSH12} and
  \cite[4.8]{brodmann(survey)},
  \[
  \Gamma_{\fa_0}(H^{\fR(M)}_{R_+}(M)_n)=H^{\fR(M)}_{R_+}(M)_n \neq 0 \quad \text{ for all $n \ll 0$}.
   \]

   This, in view of \cite[6.2.7]{BSH12}, prove the claim. \\
      Now, assume that $f_{\fa +R_+}^{R_+}(M)=t$. By \cite[3.4]{nagel1994cohomological},
      \[
      \Gamma_{\fa}(H^{t}_{R_+}(M))\cong \Gamma_{\fa}(\Gamma_{R_+}(H^{t}_{I}(M))) = \Gamma_{\fa+R_+}(H^{t}_{I}(M))\cong
      H^{t}_{\fa +R_+}(M).
      \]
      So, using \cite[2.2]{jzh} and \cite[14.1.12]{BSH12}, $\Gamma_{\fa_0}(H^{t}_{R_+}(M)_n) \neq 0$ for all $n\ll 0$.
       This implies that $\grade(\fa_0,H^{\fR (M)}_{R_+}(M)_n)=0$ for all $n\ll 0$, as desired.
\end{proof}

 Let  $N=\bigoplus_{n \in \Z}N_n$ be a graded $R$-module. Then following \cite{thomasmarley},
 $N$ is called finitely graded if  $N_n=0$  for all but finitely many $n \in \Z$. Also, we set
 \[
       g_{\fa}(N) := sup\{k \in \N_0 | H_{\fa}^i(N) ~\text {is finitely graded for all}~ i < k  \}.
       \]

  \begin{rem}\label{rem}
      Note that, by \cite[2.3]{thomasmarley}, if $N$ is \fg , then $g_{\fa}(N)=\faR (N)$.
  \end{rem}

In view of \cite[3.17]{gradedlcm_JNS}, if \aR ~ and  one of the conditions of proposition \ref{grade fa} holds, then $\grade(\fa_0,H^{\fR(M)}_{R_+}(M)_n)=f_{\fa +R_+}^{R_+} (M)-\fR(M)$ for all $n\ll 0$. In the next theorem and corollary, we consider a case where
\aR ~ and none of the conditions of \ref{grade fa} establish. We show that $\grade(\fa_0,H^{\fR(M)}_{R_+}(M)_n)$ is bounded bellow for all $n \ll 0$ and also, in a special case, it is asymptotically stable, as $n \rightarrow -\infty$.

\begin{thm}\label{grade a0}
  Let \aR ~ and assume that $\grade(R_+,M)=\fR(M) < f_{\fa +R_+}^{R_+} (M)$. Then
   $\grade(\fa_0,H^{\fR(M)}_{R_+}(M)_n) \geq f_{\fa +R_+}^{R_+} (M)-\fR(M)$ for all $n\ll 0$.
\end{thm}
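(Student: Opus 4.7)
The plan is to exploit the Grothendieck composite-functor spectral sequence for the pair of ideals $\fa_0 R$ and $R_+$, together with the Mayer--Vietoris analysis already carried out in the proof of Proposition~\ref{grade fa}. Set $s:=f_{\fa+R_+}^{R_+}(M)-t$; the aim is to show $H^i_{\fa_0}(H^t_{R_+}(M)_n)=0$ for every $0\le i<s$ and every $n\ll 0$. Since $\fa$ is homogeneous one checks that $\fa_0 R+R_+=\fa+R_+$, so there is a spectral sequence of graded $R$-modules
\[
E_2^{p,q}=H^p_{\fa_0 R}\bigl(H^q_{R_+}(M)\bigr)\Longrightarrow H^{p+q}_{\fa+R_+}(M),
\]
and extracting the $n$-th graded component (which is exact and, because $\fa_0\subseteq R_0$, satisfies $H^p_{\fa_0 R}(N)_n=H^p_{\fa_0}(N_n)$ for any graded $N$) yields a spectral sequence of $R_0$-modules $E_2^{p,q}(n)=H^p_{\fa_0}(H^q_{R_+}(M)_n)\Rightarrow H^{p+q}_{\fa+R_+}(M)_n$.

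The Mayer--Vietoris argument in the proof of Proposition~\ref{grade fa} shows that $H^q_{R_+}(M)$ is $\fa$-torsion, hence $\fa_0$-torsion since $\fa_0\subseteq\fa$, for every $q\ne t$. Consequently $E_2^{p,q}=0$ whenever $q<t$ or ($q>t$ and $p>0$); only the row $q=t$ and the column $p=0$ above it carry nonzero terms, with $E_2^{p,t}(n)=H^p_{\fa_0}(H^t_{R_+}(M)_n)$ and $E_2^{0,q}(n)=H^q_{R_+}(M)_n$ for $q>t$. Applying Remark~\ref{rem} to the ideal $\fa+R_+$, the module $H^i_{\fa+R_+}(M)$ is finitely graded whenever $i<s+t$, so $H^i_{\fa+R_+}(M)_n=0$ for $n\ll 0$ in that range. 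Combining the isomorphism~(2.2) for $i\ge t+2$ with the surjection at the tail of the exact sequence~(2.3) for $i=t+1$ then shows that $H^i_{R_+}(M)$ is a quotient of $H^i_{\fa+R_+}(M)$ for every $i\ge t+1$; hence $H^i_{R_+}(M)_n=0$ for $n\ll 0$ whenever $t+1\le i<s+t$.

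Now fix $0\le r<s$. In the sparse $E_2$, every differential leaving position $(r,t)$ lands in the vanishing rows $q<t$, and the only differential that can enter $(r,t)$ at any page is $d_r:E_r^{0,t+r-1}\to E_r^{r,t}$ originating in the sole surviving column $p=0$; a quick page-by-page induction gives $E_r^{0,t+r-1}(n)=H^{t+r-1}_{R_+}(M)_n$ and $E_r^{r,t}(n)=E_2^{r,t}(n)$. For $r\ge 2$ the source vanishes in degree $n\ll 0$ by the preceding paragraph, while for $r\in\{0,1\}$ no such incoming differential exists, so in every case $E_2^{r,t}(n)=E_\infty^{r,t}(n)$ for $n\ll 0$. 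On the diagonal $p+q=r+t$ the only two possibly nonzero $E_\infty$ contributions are $E_\infty^{r,t}(n)$ and $E_\infty^{0,r+t}(n)$, producing a short exact sequence
\[
0\to E_\infty^{r,t}(n)\to H^{r+t}_{\fa+R_+}(M)_n\to E_\infty^{0,r+t}(n)\to 0,
\]
and since $r+t<s+t$ the middle term vanishes for $n\ll 0$. Hence $H^r_{\fa_0}(H^t_{R_+}(M)_n)=E_\infty^{r,t}(n)=0$ for every $0\le r<s$ and every $n\ll 0$, which is the claimed inequality. The principal obstacle is controlling the higher differentials $d_r$ into the critical row $q=t$: the argument succeeds precisely because each such $d_r$ originates in column $p=0$, where the Mayer--Vietoris identification of $H^i_{R_+}(M)$ as a quotient of $H^i_{\fa+R_+}(M)$ for $i\ge t+1$ supplies the needed vanishing of sources in sufficiently negative degrees.
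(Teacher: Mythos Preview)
Your proof is correct and follows essentially the same approach as the paper: the same Grothendieck spectral sequence $E_2^{p,q}=H^p_{\fa_0R}(H^q_{R_+}(M))\Rightarrow H^{p+q}_{\fa+R_+}(M)$, the same use of the Mayer--Vietoris identities (2.2)--(2.3) to make $H^q_{R_+}(M)$ $\fa$-torsion for $q\neq t$ and to identify $H^i_{R_+}(M)$ as a summand/quotient of $H^i_{\fa+R_+}(M)$ for $i\ge t+1$, and the same appeal to Remark~\ref{rem} for the finitely-graded vanishing. The only cosmetic difference is that the paper first kills the column terms $(E_2^{0,t+i})_n$ and then reads off $(E_\infty^{i,t})_n\cong H^{i+t}_{\fa+R_+}(M)_n$ directly, whereas you package the two surviving $E_\infty$ terms on each diagonal into a short exact sequence and kill both at once from the vanishing of the abutment.
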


\begin{proof}
  Set $t:=\grade(R_+,M)$. Note that, in view of \cite[3.16(ii)]{gradedlcm_JNS}, $f_{\fa +R_+}^{R_+}(M)\geq t+1$. Now, consider the following convergence of spectral \seq s
  \cite[11.38]{rotman-spectral},
  \[
  E_2^{i,j}=H^i_{\fa_0R}(H^j_{R_+}(M)) \stackrel{i} \Rightarrow H^{i+j}_{\fa +R_+}(M).
  \]
  By \eqref{iso2} and \eqref{exact}, $H^j_{R_+}(M)$ is $\fa$-torsion for all $j\neq t$. As a result,
   \begin{equation}\label{Er1}
     E_2^{i,j}=0 \quad \qquad \text{ for all $i \geq 1$ and all $j\neq t$},\tag{2.4}
     \end{equation}
      and the above convergence of spectral \seq s yields
  \begin{equation}\label{Er2}
  \Gamma_{\fa_0}(H^{t+i}_{R_+}(M)_n)=(E_2^{0, t+i})_n= \ldots =(E_{i+1}^{0, t+i})_n \supseteq \ker(d_{i+1}^{0, t+i})_n=
  (E_{i+2}^{0, t+i})_n = (E_\infty^{0, t+i})_n, \tag{2.5}
  \end{equation}
  for all $i \in \N$ and all $ n \in \Z$. Therefore, if $1 \leq i \lneq f_{\fa +R_+}^{R_+}(M)-t$ then, by \eqref{iso2}, \eqref{exact} and \ref{rem}, $H^{i+t}_{R_+}(M)$ is \fG ~ and \eqref{Er2} implies that
  \[
  (E_2^{0, t+i})_n = (E_\infty^{0, t+i})_n=0  \qquad \text{for all $1 \leq i \lneq f_{\fa +R_+}^{R_+}(M)-t$ and all $n\ll 0$}.
  \]
   Also, using the concept of convergence of spectral \seq s and \ref{rem}, we have
  \[
  0=H^{i+t}_{\fa +R_+}(M)_n\cong (E_\infty ^{i,t})_n=(E_2^{i,t})_n=H^i_{\fa_0}(H^t_{R_+}(M)_n),
  \]
  for all $1 \leq i \lneq f_{\fa +R_+}^{R_+}(M)-t$ and all $n\ll 0$. In addition, in view of \eqref{Er1} and the assumption on $t$,
  \[
  0=H^t_{\fa+R_+}(M)_n \cong (E^{0,t}_{\infty})_n = (E^{0,t}_{2})_n = \Gamma_{\fa_0}(H^t_{R_+}(M)_n), \qquad
  \text{ for all $n\ll 0$}.
  \]
   Therefore, by \cite[6.2.7]{BSH12},
   \[
   \grade(\fa_0,H^{t}_{R_+}(M)_n) \geq f_{\fa +R_+}^{R_+} (M)-t \quad \qquad \text{ for all $n\ll 0$}.
   \]

\end{proof}

Let $N$ be an $R$-module. The cohomological dimension of $N$ with respect to $\fa$ is defined to be
\[
\cd(\fa,N):=sup \{i \in \Z | H^i_{\fa}(N)\neq 0 \}.
\]

A \fgR ~$X$ is called \rCM~ with respect to the ideal $\fb$ of degree $n$ if $H^i_{\fb}(X)=0$ for all $i\neq n$.\\
In the next item, using the above theorem, we show that the graded components $H^{\grade(R_+,M)}_{R_+}(M)_n$ are \rCM ~ $R_0$-modules with respect to $\fa_0$, in some special cases.

\begin{cor}\label{grade a 1}
  Let the situations be as in theorem \ref{grade a0} and set $t:= \grade(R_+,M)$. Then in each of  the following cases,
  $H^t_{R_+}(M)_n$ is a \rCM ~ $R_0$-module with respect to $\fa_0$ of degree $ f_{\fa +R_+}^{R_+}(M)-t$ for all $n \ll 0$, i.e.
  $\grade(\fa_0,H^{t}_{R_+}(M)_n)= \cd (\fa_0,H^{t}_{R_+}(M)_n)= f_{\fa +R_+}^{R_+}(M)-t$ for all $n\ll 0$.
  \begin{itemize}
    \item[(i)]  $M$ is \rCM ~ with respect to $\fa +R_+$.
    \item[(ii)]  $\fa$ is generated by \homo ~ elements of degree 0. 
    \item [(iii)]  $\cd(\fa ,M) \in \{t, t+1\}$.
  \end{itemize}
\end{cor}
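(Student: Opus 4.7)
The plan is to combine Theorem \ref{grade a0}, which already yields the lower bound $\grade(\fa_0, H^t_{R_+}(M)_n) \geq d := f_{\fa+R_+}^{R_+}(M) - t$ for $n \ll 0$, with a matching upper bound $\cd(\fa_0, H^t_{R_+}(M)_n) \leq d$. Since $\grade \leq \cd$ automatically, these two inequalities force $\grade = \cd = d$, which is exactly the asserted \rCM~ property.

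To establish the upper bound I would reuse the convergence of spectral sequences
\[
E_2^{i,j} = H^i_{\fa_0 R}(H^j_{R_+}(M)) \Longrightarrow H^{i+j}_{\fa+R_+}(M)
\]
from the proof of Theorem \ref{grade a0}. Because $H^j_{R_+}(M)$ is $\fa$-torsion for $j \neq t$, the $E_2$-page is concentrated in the row $j = t$ and the column $i = 0$; a direct page-by-page analysis shows that for $i \geq 2$ the only differential interacting with position $(i, t)$ is $d_i : E_2^{0, t+i-1} \to E_2^{i, t}$. Combining this with \eqref{iso2} identifies $E_\infty^{i, t}$ with the kernel of the natural edge map $H^{i+t}_{\fa+R_+}(M) \to H^{i+t}_{R_+}(M)$, namely $H^{i+t}_\fa(M)$, for $i \geq 2$. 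Consequently, the vanishing of $(E_2^{i,t})_{n_0} = H^i_{\fa_0}(H^t_{R_+}(M)_{n_0})$ for $i > d$ and $n_0 \ll 0$ reduces to verifying two conditions: (a) $H^{i+t}_\fa(M)_{n_0} = 0$, and (b) the differential $d_i$ vanishes in degree $n_0$.

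Each of the three cases supplies (a) and (b) differently. In case (i), the \rCM~ hypothesis forces $H^k_{\fa+R_+}(M) = 0$ for all $k \neq d + t$, which by \eqref{iso2} gives (a) for $i > d$; at the borderline $i = d + 1$, condition (b) follows from the observation that the edge map $H^{d+t}_{\fa+R_+}(M) \twoheadrightarrow H^{d+t}_{R_+}(M)$ is surjective, so $E_\infty^{0, d+t} = E_2^{0, d+t}$ and $d_{d+1} = 0$; the analysis then works for every $n$. In case (iii), the hypothesis $\cd(\fa, M) \leq t + 1$ gives (a) outright for all $i \geq 2$, and (b) follows on each graded component by combining \eqref{iso2} with \eqref{exact}. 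In case (ii), $\fa = \fa_0 R$ yields $H^{i+t}_\fa(M)_{n_0} = H^{i+t}_{\fa_0}(M_{n_0})$, and since $M$ is finitely generated and nonnegatively graded, $M_{n_0} = 0$ for $n_0 \ll 0$, so (a) holds; (b) again follows from the edge-map analysis applied on each graded piece.

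The main obstacle is the vanishing of the borderline differential $d_{d+1}$, because $H^{t+d}_{R_+}(M)$ is typically not finitely graded and its negative-degree components can be nonzero, so killing this differential is a priori nontrivial. In case (i) it is handled globally by the surjectivity of the edge map, while in cases (ii) and (iii) the asymptotic vanishing of $H^{i+t}_\fa$ in negative degrees turns out to force $E_2^{i, t}_{n_0} = 0$ for all $i \geq 2$ and $n_0 \ll 0$, consistent with those two hypotheses restricting the \rCM~ degree $d$ to be $1$.
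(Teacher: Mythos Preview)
Your argument is correct and arrives at the same key fact the paper uses, namely that $H^i_{\fa_0}(H^t_{R_+}(M)_n)\cong H^{i+t}_{\fa}(M)_n$ for all $i\geq 2$, but you obtain it by a different route. The paper never goes back to the spectral sequence for the corollary: instead it splits the Mayer--Vietoris sequence into the short exact sequences \eqref{exact2}--\eqref{exact3}, applies $\Gamma_{\fa}(-)$, and invokes the Nagel--Schenzel isomorphism $H^i_{\fa}(H^t_I(M))\cong H^{i+t}_{\fa}(M)$ to get \eqref{iso4} directly; then \eqref{iso5} translates this to $\fa_0$-cohomology of the graded pieces. Your approach instead stays inside the spectral sequence of Theorem~\ref{grade a0} and kills $E_2^{i,t}$ for $i\geq 2$ by checking that (a) $E_\infty^{i,t}\cong H^{i+t}_{\fa}(M)$ vanishes and (b) the incoming differential $d_i$ is zero. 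What makes (b) work is precisely that the Mayer--Vietoris data \eqref{iso2}--\eqref{exact} force the edge map $H^{t+i-1}_{\fa+R_+}(M)\to H^{t+i-1}_{R_+}(M)$ to be surjective for every $i\geq 2$, so $E_\infty^{0,t+i-1}=E_2^{0,t+i-1}$ and $d_i=0$; note this holds uniformly and does not depend on the case, so your last paragraph slightly understates the situation --- condition (b) is not case-specific, and in cases (ii)--(iii) it is not the vanishing of $H^{i+t}_{\fa}$ alone that forces $E_2^{i,t}=0$ but the combination of (a) with this universal edge-map surjectivity. One point you should make explicit is the identification of the spectral-sequence edge map with the natural restriction map appearing in Mayer--Vietoris (both are induced by $\Gamma_{\fa+R_+}\hookrightarrow\Gamma_{R_+}$); once that is granted, your argument and the paper's are equivalent reformulations, with the paper's Nagel--Schenzel shortcut trading the differential bookkeeping for a single cohomological identity.
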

\begin{proof}
  \begin{itemize}
    \item [(i)] By hypothesis and \cite[3.15(ii)]{gradedlcm_JNS}, $\cd(\fa+R_+ ,M)=f_{\fa +R_+}^{R_+}(M)$.
     Since $H^i_I(M)=0$ for all $i\neq t$, using a \homo ~ \MV ~ \seq ~and \cite[2.2]{cohomological3} we have the following \homo ~ exact \seq s of $R$-modules

  \begin{equation}\label{exact2}
  0 \longrightarrow H_{\fa + R_+} ^t(M) \longrightarrow H_{\fa}^t(M) \oplus H_{R_+}^t(M)  \longrightarrow X \longrightarrow 0, \tag{2.6}
 \end{equation}
 and
 \begin{equation}\label{exact3}
   0 \longrightarrow X \longrightarrow H_I^t(M)  \longrightarrow Y \longrightarrow 0 \tag{2.7},
 \end{equation}
 where $X$ is a submodule of $H^t_I(M)$ and $Y$ is a submodule of $H^{t+1}_{\fa+R_+}(M)$.\\
 Now, by affecting $\Gamma_{\fa}(-)$ on exact \seq s ~ \eqref{exact2} and \eqref{exact3} and using 
 \cite[3.4]{nagel1994cohomological}, the following isomorphisms have been obtained

 \begin{equation}\label{iso4}
   H^i_{\fa}(H^t_{R_+}(M))\cong H^i_{\fa}(H^t_{I}(M))\cong H^{i+t}_{\fa}(M),  \qquad \quad \text{for all}~ i > 1 \tag{2.8}.
 \end{equation}
     Also, using \cite[2.3]{cohomological3}, $\cd(\fa ,M)\leq \cd(\fa +R_+,M)=f_{\fa +R_+}^{R_+}(M)$. Therefore,
     \begin{equation}\label{zero}
          H^i_{\fa}(H^t_{R_+}(M))=0 \qquad \quad \text{ for all $i\gneq f_{\fa +R_+}^{R_+}(M)-t $}.\tag{2.9}
    \end{equation}
   On the other hand, by  \cite[14.1.12]{BSH12} and the fact that $H^t_{R_+}(M)$ is $R_+$-torsion, we have
   \begin{equation}\label{iso5}
   H^i_{\fa}(H^t_{R_+}(M))_n \cong H^i_{\fa_0+R_+}(H^t_{R_+}(M))_n \cong H^i_{\fa_0R}(H^t_{R_+}(M))_n \cong
   H^i_{\fa_0}(H^t_{R_+}(M)_n) \tag{2.10}
   \end{equation}
    for all $i$ and all $n $.
        Therefore, by  \eqref{zero},
        \[
        \grade(\fa_0,H^{t}_{R_+}(M)_n)\leq \cd(\fa_0,H^{t}_{R_+}(M)_n) \leq f_{\fa +R_+}^{R_+}(M)-t, \quad
        \text{for all $n \in \Z$}.
        \]
        Now, using \ref{grade a0}, the result follows.
   \item [(ii)] Since $M$ is \fg , by \eqref{iso4}, \cite[14.1.12]{BSH12} and \cite[Theorem 1]{kirby1973artinian},
    \[
    H^i_{\fa_0}(H^t_{R_+}(M)_n) \cong H^{i+t}_{\fa_0}(M_n)=0 \qquad \text{ for all $i > 1$ and all $n\ll 0$}.
    \]
    So, $\grade(\fa_0,H^{t}_{R_+}(M)_n) \leq \cd(\fa_0,H^{t}_{R_+}(M)_n) \leq 1$ for all $n\ll 0$. Also, by \ref{grade a0}, $\grade(\fa_0,H^{t}_{R_+}(M)_n)\geq f_{\fa +R_+}^{R_+}(M)-t \geq 1$, for all $n \ll 0$. This proves the claim.
    \item [(iii)] By hypothesis and \eqref{iso4}, $H^i_{\fa}(H^t_{R_+}(M))=0$ for all $i > 1$ and the statement holds as part (ii).

  \end{itemize}
\end{proof}

  The cohomological finite length dimension of $M$ is defined to be
  \[
 g(M):= \inf \{i\in \N \mid l_{R_0}(H^i_{R_+}(M)_n) = \infty ~ \text{for infinitely many n} \},
 \]
 where $ l_{R_0}(H^i_{R_+}(M)_n)$ is the length of the $R_0$-module $H^i_{R_+}(M)_n$.

The following proposition considers Artinianness of some local cohomology modules with respect to linked ideals as well as the stability of a set of integers.

\begin{prop}\label{dim}
  Assume that $\q_0$ is an $\m_0$-primary ideal of $R_0$ such that  $\q_0R \stackrel{h}\sim_{(I;M)} R_+$ and set $t:=\grade(R_+,M)$. Then the following statements hold.
  \begin{itemize}
    \item [(i)] The set $\{\dim_{R_0} M_n\}_{n \in \Z}$ of integers is asymptotically stable for $n\rightarrow +\infty$  and it equals $t$, i.e.
    $\dim_{R_0} M_n=t$ for all $n \gg 0$.
    \item [(ii)] $H^j_{R_+}(M)$ and $H^j_{\m_0R}(M)$ are Artinian $R$-modules for all $j\neq t$. Also, if $g(M) <\infty$, then
    $H^i_{\m_0R}(H^j_{R_+}(M))$ is Artinian for all $i$ and all $j$  and that $g(M)=t$.
  \end{itemize}
\end{prop}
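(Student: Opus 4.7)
The overall plan is to combine the homogeneous Mayer--Vietoris sequence applied to $\q_0 R$ and $R_+$ with the Grothendieck composition-of-functors spectral sequence for $\Gamma_{\m}=\Gamma_{\m_0R}\circ\Gamma_{R_+}$, where $\m:=\m_0R+R_+$ is the graded maximal ideal of $R$. First I would record that the linkage hypothesis forces $\grade(I,M)=\grade(\q_0R,M)=\grade(R_+,M)=t$ (a standard consequence of linkage, as used in \cite{gradedlcm_JNS}), so $I$ is generated by a homogeneous $M$-regular sequence of exactly $t$ elements, and therefore $H^i_I(M)=0$ for all $i\neq t$. Since $\q_0$ is $\m_0$-primary, $\sqrt{\q_0R+R_+}=\m$, giving $H^i_{\q_0R+R_+}(M)\cong H^i_{\m}(M)$, which is Artinian for every $i$ because $M$ is finitely generated graded over a standard graded ring with local base. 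Feeding the vanishing of $H^i_I(M)$ into the Mayer--Vietoris sequence exactly as in the proof of Proposition \ref{grade fa} then yields $H^i_{\q_0R}(M)\oplus H^i_{R_+}(M)\cong H^i_{\m}(M)$ for $i>t+1$ and realises the same direct sum as a quotient of $H^{t+1}_{\m}(M)$ at $i=t+1$; since $\sqrt{\q_0R}=\sqrt{\m_0R}$ gives $H^i_{\q_0R}(M)\cong H^i_{\m_0R}(M)$, both $H^j_{R_+}(M)$ and $H^j_{\m_0R}(M)$ are subquotients of Artinian modules, hence Artinian, for every $j\neq t$. This handles the first assertion of (ii).

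For (i), I would use the graded \v{C}ech identification $(H^i_{\m_0R}(M))_n\cong H^i_{\m_0}(M_n)$, which holds because $\m_0$ can be generated by elements of degree $0$. The Artinianness from the previous paragraph implies $(H^i_{\m_0R}(M))_n=0$ for all $i>t$ and $n\gg 0$ (Artinian graded $R$-modules vanish in sufficiently high degree), while $\grade(\m_0R,M)=t$ gives $H^i_{\m_0R}(M)=0$ for every $i<t$. Hence $H^i_{\m_0}(M_n)=0$ for every $i\neq t$ and $n\gg 0$. Because $M\neq\Gamma_{R_+}(M)$, the graded piece $M_n$ is nonzero for $n\gg 0$, so Grothendieck non-vanishing applied to the finitely generated $R_0$-module $M_n$ forces $\dim_{R_0}(M_n)=t$ for $n\gg 0$, which is (i).

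For the remaining claim in (ii), I would analyse the Grothendieck spectral sequence $E_2^{i,j}=H^i_{\m_0R}(H^j_{R_+}(M))\Rightarrow H^{i+j}_{\m}(M)$. The columns $j\neq t$ collapse at once: $H^j_{R_+}(M)$ is Artinian, hence $\m_0R$-torsion, so $E_2^{i,j}=0$ for $i\geq 1$ and $E_2^{0,j}=H^j_{R_+}(M)$ is Artinian. For the column $j=t$, careful bookkeeping shows that every outgoing differential $d_r\colon E_r^{i,t}\to E_r^{i+r,t-r+1}$ vanishes (the target lies in row $t-r+1<t$ where $H^{t-r+1}_{R_+}(M)=0$), and every incoming differential $d_r\colon E_r^{i-r,t+r-1}\to E_r^{i,t}$ has zero source unless $r=i$ (because for $r\geq 2$ the module $H^{t+r-1}_{R_+}(M)$ is Artinian, hence $\m_0R$-torsion, so $H^{i-r}_{\m_0R}$ of it vanishes whenever $i-r\geq 1$). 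Therefore $E_r^{i,t}=E_2^{i,t}$ for $r\leq i$, only the single differential $d_i\colon E_i^{0,t+i-1}\to E_i^{i,t}$ can perturb this entry, and its image is Artinian; combined with the fact that $E_\infty^{i,t}$ is a subquotient of the Artinian module $H^{i+t}_{\m}(M)$, the short exact sequence $0\to \operatorname{im}(d_i)\to E_2^{i,t}\to E_\infty^{i,t}\to 0$ forces $E_2^{i,t}=H^i_{\m_0R}(H^t_{R_+}(M))$ to be Artinian for every $i\geq 0$.

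Finally, the equality $g(M)=t$ follows because for $i\neq t$ the graded components $H^i_{R_+}(M)_n$ are finitely generated over $R_0$ by \cite[16.1.5]{BSH12} and also Artinian (as components of the Artinian $R$-module $H^i_{R_+}(M)$), so they have finite length; thus the defining condition for $g(M)$ can only be met at $i=t$, forcing $g(M)\in\{t,\infty\}$, and the hypothesis $g(M)<\infty$ pins it down to $g(M)=t$. I expect the main obstacle to be the spectral-sequence bookkeeping on the $j=t$ column: one must verify that no incoming differential other than $d_i$ touches $E_r^{i,t}$ in order that the Artinianness of $H^{i+t}_{\m}(M)$ genuinely propagates back to the full $E_2$-term.
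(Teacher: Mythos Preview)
Your argument is correct. Part (i), the first assertion of (ii), and the identification $g(M)=t$ all follow essentially the paper's own route: Mayer--Vietoris together with the vanishing $H^i_I(M)=0$ for $i\neq t$ yields that $H^j_{\q_0R}(M)\oplus H^j_{R_+}(M)$ is a subquotient of the Artinian module $H^j_{\m}(M)$ for every $j\neq t$, and the graded \v{C}ech identification $H^i_{\m_0R}(M)_n\cong H^i_{\m_0}(M_n)$ then pins down $\dim_{R_0}M_n$ for $n\gg 0$ via Grothendieck non-vanishing. The finite-length argument for $g(M)=t$ is also the same.

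The genuine divergence is in the Artinianness of $H^i_{\m_0R}(H^t_{R_+}(M))$. The paper does \emph{not} run the composite-functor spectral sequence here. Instead it invokes the isomorphism (established earlier in the paper via Mayer--Vietoris and the Nagel--Schenzel identification)
\[
H^i_{\q_0R}\bigl(H^t_{R_+}(M)\bigr)\;\cong\;H^{i+t}_{\q_0R}(M)\qquad(i>1),
\]
so that for $i>1$ the module in question is isomorphic to $H^{i+t}_{\m_0R}(M)$, already shown to be Artinian since $i+t\neq t$; the remaining cases $i=0,1$ are handled by citing an external result of Hassanzadeh--Jahangiri--Zakeri, using $t\leq g(M)$. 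Your spectral-sequence bookkeeping replaces both of these inputs by a single direct analysis: the only differential touching $E_r^{i,t}$ is the incoming $d_i$ from an Artinian source, and $E_\infty^{i,t}$ is a subquotient of the Artinian $H^{i+t}_{\m}(M)$, so the extension forces $E_2^{i,t}$ to be Artinian. This is self-contained, avoids the outside reference, and in fact shows the Artinianness without ever invoking the hypothesis $g(M)<\infty$ (that hypothesis is then needed only to rule out $g(M)=\infty$ at the end). The paper's approach is shorter once one is willing to import the Nagel--Schenzel isomorphism and the cited lemma; yours trades those citations for a transparent spectral-sequence computation. One small presentational point: your short exact sequence $0\to\operatorname{im}(d_i)\to E_2^{i,t}\to E_\infty^{i,t}\to 0$ should be read with $\operatorname{im}(d_i)=0$ when $i\leq 1$, since the relevant differential does not exist on those pages; you may want to state the cases $i=0,1$ (where $E_2^{i,t}=E_\infty^{i,t}$) separately.
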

\begin{proof}
\begin{itemize}
  \item [(i)]
  Since $M$ is finitely generated, by \cite[Theorem 1]{kirby1973artinian}, $R_1M_n=M_{n+1}$ for all $n\gg 0$. It shows that
   $\dim_{R_0} M_n \geq \dim_{R_0} M_{n+1}$ for
  all $n\gg 0$. Thus, there exists $l \in \N$ \st
  \begin{equation*}
    \dim_{R_0} M_n=l \qquad \qquad \quad \text{for all} ~n\gg 0.
  \end{equation*}
  On the other hand, by  \cite[14.1.12]{BSH12} and \cite[3.9]{gradedlcm_JNS},
  \[
  H^{i}_{\m_0}(M_n)\cong H^i_{\q_0}(M_n) \cong H^i_{q_0R}(M)_n=0 \qquad \quad \text{for all} ~i\neq t ~\text{and all} ~ n\gg 0.
  \]
   So, using \cite[6.1.4]{BSH12}, $l=t$.

   \item [(ii)] The first statement holds by \cite[7.1.3]{BSH12},  \eqref{iso2} and \eqref{exact}.  \eqref{iso2} and \eqref{exact}, also, ensure that $H^j_{R_+}(M)$ is $\m_0R$-torsion for all $j\neq t$, so $H^i_{\m_0R}(H^j_{R_+}(M))$ is Artinian for all $i$ and all $j\neq t$.\\
        For the case $j=t$, according to the definition of $g(M)$, $t \leq g(M)$. So, by \cite[2.4]{jzh}, $H^i_{\m_0R}(H^t_{R_+}(M))$ is Artinian for $i=0,1$ and also it does by \cite[1.2.3]{BSH12} and \eqref{iso4} for all $i>1$.\\
   Moreover, since $H^j_{R_+}(M)$ is Artinian for all $j\neq t$, by \cite[Theorem 1]{kirby1973artinian}, $H^j_{R_+}(M)_n$
   is an Artinian $R_0$-module for all $j\neq t$ and all $n \in \Z$. Therefore, using \cite[16.1.5]{BSH12},
   $ l_{R_0}(H^j_{R_+}(M)_n) < \infty$ for all $j\neq t$. As a result $g(M)=t$.
\end{itemize}
\end{proof}


\end{document}